\newcounter{remark}[section]
\def\claim{\par\medskip\noindent\refstepcounter{remark}\hbox{\bf Remark \arabic{section}.\arabic{remark}}
	\ 
}
\def\endclaim{
	\par\medskip}
\newtheorem{theorem}{Theorem}
\newtheorem{definition}{Definition}
\def\endpf{{\ \hfill\hbox{\vrule width1.0ex height1.0ex}\parfillskip 0pt
	}}
	\newenvironment{proof}{\noindent{\bf Proof:}}{\endpf}
\begin{document}
\title{Asymptotic comparison of two-stage selection procedures under quasi-Bayesian framework}
\author{Royi Jacobovic\thanks{Department of Statistics; The Hebrew University of Jerusalem; Jerusalem 9190501; Israel.
{\tt royi.jacobovic@mail.huji.ac.il}}}

\date{\today}
		\maketitle
\begin{abstract}
This paper revisits the procedures suggested by Dudewicz and Dalal (1975) and Rinott (1978) which are designed for selecting the population with the highest mean among independent Gaussian populations with unknown and possibly different variances. In a previous paper Jacobovic and Zuk (2017) made a conjecture that the relative asymptotic efficiency of these procedures equals to the ratio of two certain sequences. This work suggests a quasi-Bayesian modelling of the problem under which this conjecture is valid. 
In addition, this paper motivates an open question regarding the extreme value distribution of the maxima of triangular array of independent student-t random variables with an increasing number of degrees of freedom.
\end{abstract}

\section{Introduction}
Consider the problem of a decision maker who has an access to noisy observations taken from a set of populations and has to select the best one. In general, the notion of the best population may be determined with respect to different criteria including highest mean, lowest variance, highest $R$-squared with respect to some target variable and etc. The branch of decision theory which deals with this kind of decisions is known as selection procedures or multiple decision procedures. For books regarding this subject see e.g \cite{Gibbons1999,Gupta1979}. Traditionally, these problems were investigated with small number of populations. Some modern applications of this theory involve gene-expression datasets and discrete event simulation which is a popular methodology for finding the optimal (or near-optimal) system design (e.g. populations).  A comprehensive survey of these applications is provided by \cite{Jacobovic2017}. The point is that both of these modern applications usually involve enormous number of populations.
Respectively, this paper is devoted for analysing the asymptotic performances of two selection procedures which were suggested respectively by Dudewicz and Dalal \cite{Dudewicz1975} and Rinott \cite{Rinott1978}. These procedures were designed in order to find the population with the highest mean among a set of independent Gaussian populations with possibly different variances which are unknown to the user. What we do here is to define their relative asymptotic efficiency as the number of populations tends to infinity. Then, this work suggests a quasi-Bayesian model under which the conjecture of \cite{Jacobovic2017} stating that the asymptotic relative efficiency equals to the limit of a ratio of two sequences $(h_k^1)$ and $(h_k^2)$ to be later explained is valid. The rest is organized as follows: Section \ref{sec: model decription} provides the model description with further details about the above-mentioned procedures. Section \ref{sec: main results} includes the main results and proofs. Finally, Section \ref{sec: discussion} contains a discussion regarding the way that this work motivates an open question about the asymptotic distribution of a sequence of maxima generated by a triangular array of independent student-t random variables (r.v's) with an increasing number of degrees of freedom (d.f's). 
    
\section{Model description} \label{sec: model decription}
Let $\Delta>0$ and consider a sequence of random variables $\Sigma=\{\sigma_i^2\}_{i=1}^\infty$ which are positive with probability one. For each $k\in\mathbb{N}$ the following is a quasi-Bayesian model of samplings drawn from $k+1$ Gaussian populations. We denote these populations by $\Pi^{(k)}_1,\ldots,\Pi^{(k)}_ {k+1}$ and let $X_j^i(k)$ be the $j$th sampling from the population $\Pi^{(k)}_i$. Now, consider a scalar vector $\theta^{(k)}=(\theta_1^{(k)},\ldots,\theta^{(k)}_{k+1})$ which belongs to
\begin{equation*}
\Theta(\Delta):=\left\{
\tilde{\theta}\in\mathbb{R}^{k+1};\min_{1\leq i_1<i_2\leq k+1}|\tilde{\theta}_{i_1}-\tilde{\theta}_{i_2}|>\Delta\right\}\,.
\end{equation*}
The r.v's $X_j^i(k);i=1,\ldots,k+1\ , \ j\in\mathbb{N}$ are such that given $\Sigma$, they are independent and satisfying $X_j^i(k)\sim N(\theta_i^{(k)},\sigma_i^2),\forall i,j\in\mathbb{N}$. 
\subsection{Selection problem}
It is assumed that  $\theta^{(1)},\theta^{(2)},\ldots$ are unknown vectors. More assumptions are that all components of $\Sigma$ are unobservable while $\Delta$ is a parameter which is set by the user with respect to her preferences. In particular, commonly, $\Delta$ is considered as a parameter which reflects the indifference level of the user regarding two populations with distinct means (see e.g. \cite{Bechhofer1954}). With respect to this setup, fix $k\in\mathbb{N}$ and the purpose is to pinpoint the population with the highest mean among the populations $\Pi^{(k)}_1,\ldots,\Pi^{(k)}_ {k+1}$. To this end, a  selection procedure, i.e. a  sampling policy with a selection rule to pinpoint the correct population is required. The requirement from such a rule is to identify the correct population (PCS) with probability which is not less than $p\in(0,1)$ where $p$ is a confidence parameter to be determined by the user. 

\subsection{Two-stage selection procedures}
Two optional two-stage procedures were suggested respectively by Dudewicz and Dalal \cite{Dudewicz1975} (procedure 1) and Rinott \cite{Rinott1978} (procedure 2). Roughly speaking, the first stage is about drawing constant number of samplings $N_0(k)$ from every population. This sample is used to estimate the variance of each population. We denote the estimated variances by $S_1^2(k),\ldots,S_{k+1}^2(k)$. Then, the second stage tells the user to draw more samplings from each population. Importantly, for each population, the additional sample size taken at the second stage is an increasing function of the corresponding empirical variance.  Finally, for each population, the user should average the samplings with respect to some choice of weights and pick the population which is associated with the highest weighted-average. The exact details of these procedures are summarized together at \cite{Jacobovic2017}. For our purposes, we only recall the relevant details. To start with, let $G_{\nu_k}(\cdot)$ and $g_{\nu_k}(\cdot)$ be the c.d.f. and p.d.f of student's-t distribution with $\nu_k:=N_0(k)-1$ degrees of freedom (d.f's). In addition, for each $i$, let $N_i^1(k),N_i^2(k)$ be the number of samplings taken from the $i$th population by the procedures 1 and 2. It is known that for each $l=1,2$
\begin{equation*}\label{eq:sample_size_given_h}
N_i^l(k)=\max\bigg\{N_0(k)+1,\bigg \lceil \bigg(\frac{h_k^l}{\Delta}\bigg)^2 S_i^2(k)\bigg\rceil \bigg\} .
\end{equation*}
where $h_k^{1}$ and $h_k^{2}$ are defined respectively as the unique solutions of the following equations in $h$
\begin{align*}
&p=\int_{-\infty}^\infty G^{k}_{\nu_k}(t+h)g_{\nu_k}(t)dt\\&p=\left[\int_{-\infty}^\infty G_{\nu_k}(t+h)g_{\nu_k}(t)dt\right]^k\,.
\end{align*} 

\subsection{Relative asymptotic efficiency}
To evaluate the performance of the procedures which were introduced earlier, it is possible to consider the expected number of samplings that each of them requires. It is straightforward that when the number of populations is taken to infinity, then the corresponding performance measure also tends to infinity. Therefore, in order to compare the asymptotic performance of these procedures, we are looking at the relative asymptotic efficiency which is defined as the next limit whenever it exists.   

\begin{definition}\label{def: relative efficiency}
	The relative asymptotic efficiency of the above-mentioned procedures is given by the limit 
	\begin{equation*}
	\eta:=\lim_{k\to\infty}\frac{\sum_{i=1}^{k+1}EN_i^2(k)}{\sum_{i=1}^{k+1}EN_i^1(k)}
	\end{equation*}
	if exists. Otherwise, it is not defined. 
\end{definition}

\section{Main results}\label{sec: main results}

The next theorem refers to the case where the sample-size of the first stage is constant with respect to $k$. 
\begin{theorem} \label{thm: constant initial sample size}
	Let $N_0(k)=N_0=\nu+1,\forall k\in\mathbb{N}$. If 
	\begin{enumerate}
		\item $\sigma_i^2\stackrel{d}{=}\sigma_j^2,\forall i,j$
		\item $\sigma^2:=\mathbb{E}\sigma_1^2<\infty$ 
	\end{enumerate}
	then for every $l=1,2$
	\begin{equation*}
	\sum_{i=1}^{k+1}EN_i^2(k)\sim(k+1)\left(\frac{h_k^j\sigma}{\Delta}\right)^2 \ \ \text{as} \ \ k\rightarrow\infty
	\end{equation*} 
	and hence $\eta=\lim_{k\to\infty}\frac{h_k^2}{h_k^1}=2^{2/\nu}$\,. 
\end{theorem}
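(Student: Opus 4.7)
The plan is to split the proof into two independent pieces: first the sample-size equivalence $\sum_{i=1}^{k+1}\mathbb{E}N_i^l(k)\sim(k+1)(h_k^l\sigma/\Delta)^2$ for each $l\in\{1,2\}$, and second the ratio asymptotics $h_k^2/h_k^1\to 2^{1/\nu}$, from which $\eta=2^{2/\nu}$ follows by squaring the ratio of the two leading terms. For the first piece I would use the elementary sandwich
\[ (h_k^l/\Delta)^{2} S_i^{2}(k)\;\leq\; N_i^l(k)\;\leq\; (N_0+2)+(h_k^l/\Delta)^{2} S_i^{2}(k), \]
which follows from $\lceil x\rceil\leq x+1$ and $\max\{a,b\}\leq a+b$ for non-negative reals. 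Conditioning on $\Sigma$ and using $(N_0-1)S_i^{2}(k)/\sigma_i^{2}\sim\chi_{\nu}^{2}$ yields $\mathbb{E}S_i^{2}(k)=\mathbb{E}\sigma_i^{2}=\sigma^{2}$ by the i.i.d.\ hypothesis; summing, the claim reduces to showing $h_k^l\to\infty$, which kills the $O(k)$ slack against the $(k+1)(h_k^l)^{2}$ main term. Divergence is quick: for $l=2$, $F_\nu(h_k^2):=\int G_\nu(t+h_k^2)g_\nu(t)\,dt=p^{1/k}\to 1$ forces $h_k^2\to\infty$; for $l=1$, if $(h_k^1)$ were bounded along a subsequence, dominated convergence applied to $G_\nu^k(t+h_k^1)\to 0$ would contradict $\int G_\nu^k(t+h_k^1)g_\nu(t)\,dt=p>0$.

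The second piece is the genuine technical content and rests on the fact that the $t_\nu$ distribution has regularly varying tail with index $-\nu$, say $P(T>h)\sim C_\nu h^{-\nu}$ as $h\to\infty$. For $h_k^2$, note $1-F_\nu(h_k^2)=1-p^{1/k}\sim-\ln(p)/k$, and by the convolution rule for regularly varying tails one has $P(T_1-T_2>h)\sim 2 C_\nu h^{-\nu}$ for independent $T_1,T_2\sim t_\nu$; inverting gives $h_k^2\sim(2C_\nu k/(-\ln p))^{1/\nu}$. For $h_k^1$, rewrite the defining equation as $P(M_k-T_0\leq h_k^1)=p$ where $M_k=\max_{1\leq i\leq k}T_i$ and $T_0\sim t_\nu$ is independent. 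Fisher--Tippett--Gnedenko gives $M_k/(C_\nu k)^{1/\nu}\Rightarrow Y$ with $P(Y\leq y)=\exp(-y^{-\nu})$; since $T_0/(C_\nu k)^{1/\nu}\to 0$ in probability, the same Fréchet limit holds for $(M_k-T_0)/(C_\nu k)^{1/\nu}$, and continuity of the limiting c.d.f.\ lifts weak convergence to convergence of $p$-quantiles, yielding $h_k^1\sim(C_\nu k)^{1/\nu}(-\ln p)^{-1/\nu}$. Dividing the two asymptotics produces $h_k^2/h_k^1\to 2^{1/\nu}$, and squaring delivers $\eta=2^{2/\nu}$.

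The main obstacle is the extreme value step. The sandwich bound and the divergence argument are routine; the delicate point is that the tail constant $C_\nu$ must cancel cleanly between the asymptotics of $h_k^1$ and $h_k^2$ so that only the factor $2$ (inherited from the convolution tail of $T_1-T_2$) survives. Executing this requires tracking the regular-variation constants precisely and invoking convergence of quantiles under the Fréchet limit; any slack in these estimates would lose the exact exponent $2^{2/\nu}$.
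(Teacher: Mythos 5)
Your proof is correct, and it reaches the conclusion by a genuinely different route on both halves. For the sample-size equivalence, the paper normalizes $\sum_i \mathbb{E}N_i^l(k)$ by $(k+1)(h_k^l/\Delta)^2$, rewrites it as the expectation of a single maximum, dominates that maximum by an integrable random variable using the monotonicity and positivity of $(h_k^l)$ from some index on, and applies dominated convergence to get the limit $\sigma^2$. Your deterministic sandwich
\[
(h_k^l/\Delta)^{2} S_i^{2}(k)\;\leq\; N_i^l(k)\;\leq\; (N_0+2)+(h_k^l/\Delta)^{2} S_i^{2}(k)
\]
followed by linearity of expectation and $h_k^l\to\infty$ is more elementary: it avoids any convergence theorem, does not need monotonicity of $(h_k^l)$, and isolates exactly what is used ($\mathbb{E}S_1^2=\sigma^2<\infty$ and divergence of $h_k^l$, which you also prove rather than assert). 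For the second half, the paper simply cites Theorems 4.1 and 4.2 of Jacobovic and Zuk (2017) for the first-order asymptotics of $h_k^1$ and $h_k^2$; you re-derive them from scratch via regular variation of the $t_\nu$ tail, the convolution tail rule for $P(T_1-T_2>h)$, the Fr\'echet limit for the maximum of $k$ i.i.d.\ $t_\nu$ variables, and quantile convergence (valid here since the Fr\'echet c.d.f.\ is continuous and strictly increasing on $(0,\infty)$). That is essentially the content of the cited theorems, so your argument is self-contained where the paper's is not; the price is the extra care you correctly flag about the tail constant $C_\nu$ cancelling. One further point in your favor: the theorem as printed reads $\eta=\lim_k h_k^2/h_k^1=2^{2/\nu}$, which is inconsistent with the displayed equivalence $\sum_i\mathbb{E}N_i^l(k)\sim(k+1)(h_k^l\sigma/\Delta)^2$ unless one reads it as $\eta=\lim_k\bigl(h_k^2/h_k^1\bigr)^2$; your version ($h_k^2/h_k^1\to 2^{1/\nu}$, hence $\eta=2^{2/\nu}$ after squaring) is the internally consistent one and matches the cited asymptotics.
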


\begin{proof} 
To start with, observe that $\sigma_1^2,\sigma^2_2,\ldots$ are equally distributed and that
for each $i=1,\ldots,k+1$ the distribution of $S_i^2(k)$ given $\Sigma$ is determined uniquely by $\sigma_i^2$. Especially, since $N_0(k)=N_0,\forall k\in\mathbb{N}$, then this distribution doesn't depend on $k$. Therefore, we shall write that for each $i$, $S^2_i(k)\sim S_1^2$. Now, fix $l\in\{1,2\}$ and notice that $\{h_k^l\}$ is a monotonic sequence such that $h_k^l\to\infty$ as $k\to\infty$. Therefore, we may let $k^l_0:=\min\{k;h_k^l>0\}$ and fix $k>k_0^l$. Then, by their definitions, the r.v's $N_1^l(k),\ldots,N^l_{k+1}(k)$ are equally distributed. This can be used in order to show that
	\begin{align*}
	\alpha_k^l:&=\frac{1}{(k+1)(h_k^l/\Delta)^2}\sum_{i=1}^{k+1}\mathbb{E}N_i^l(k)\\&=\frac{1}{(h_k^l/\Delta)^2}\mathbb{E}N_1^l(k)\\&=\mathbb{E}\max\left\{\frac{N_0+1}{(h_k^l/\Delta)^2},\frac{\left\lceil S_1^2\left(\frac{h_k^l}{\Delta}\right)^2\right\rceil}{(h_k^l/\Delta)^2}\right\}\,.
	\end{align*}
	Now, recall that given $\Sigma$, $S_1^2$ is an unbiased estimator for $\sigma^2_1$. Thus, the law of total expectation implies that 
	\begin{equation*}
	\mathbb{E}S_1^2=\mathbb{E}_\Sigma\mathbb{E}\left(S_1^2|\Sigma\right)=\mathbb{E}\sigma_1^2=\sigma^2<\infty
	\end{equation*}
	which means that $S_1^2$ is an integrable r.v. In addition, since $\{h_k^l\}$ is a non-decreasing sequence which is positive from $k_0^l$, then it can be seen that the maximum inside the expectation is non-negative and bounded by
	
	\begin{equation*}
	Y:=\frac{N_0+1}{(h_{k_0^l}^j/\Delta)^2}+S_1^2+\left(\frac{h_{k^l_0}^j}{\Delta}\right)^{-2}
	\end{equation*}
	which is an integrable r.v. Thus, the dominated convergence theorem (DCT) may be applied in order to derive the limit
	\begin{align*}
	&\exists\lim_{k\rightarrow\infty}\alpha^l_k\\&\stackrel{DCT}{=}\mathbb{E}\left[\lim_{k\rightarrow\infty}\max\left\{\frac{N_0+1}{(h_k^l/\Delta)^2},\frac{\left\lceil S_1^2\left(\frac{h_k^l}{\Delta}\right)^2\right\rceil}{(h_k^l/\Delta)^2}\right\}\right]\\&=\mathbb{E}S_1^2=\sigma^2\,.
	\end{align*} 
	In particular, notice that the derivation of the limit inside the expectation was made by using the facts that $h_k^l\to\infty$ as $k\to\infty$ and $S_1^2>0$ while $N_0$ is a constant. This establishes the first order approximation for $\sum_{i=1}^{k+1}\mathbb{E}N_i^l(k)$ as $k\to\infty$. Therefore, it is an immediate result that 
	\begin{equation*}
	\eta=\lim_{k\to\infty}\frac{h_k^2}{h_k^1}=2^{2/\nu}
	\end{equation*}  
	where the last equality holds due to Theorems 4.1 and 4.2 of \cite{Jacobovic2017}.  
\end{proof}\\
The next theorem refers to the case where the initial sample size tends to infinity as the number of populations tends to infinity.  

\begin{theorem} \label{thm: varying initial sample size}
	Assume that
	\begin{enumerate}
		\item $\sigma_i^2\stackrel{d}{=}\sigma_j^2,\forall i,j$
		\item $\mathbb{E}\left[(\sigma_1^2)^2\right]<\infty$
		\item $2\leq N_0(k)\to\infty$ as $k\to\infty$
		\item $\exists\lim_{k\to\infty}\frac{N_0(k)}{(h_k^l/\Delta)^2}=:L_l<\infty\ \ , \ \ \forall l=1,2$ 
	\end{enumerate}
	and let $Y_l=\max\{L_l,\sigma_1^2\},\forall l=1,2$. Then, for every $l=1,2$
	\begin{equation*}
	\mathbb{E}N_l(k)\sim(k+1)\left(\frac{h_k^l}{\Delta}\right)^2\mathbb{E}Y_l \ \ \text{as} \ \ k\rightarrow\infty
	\end{equation*} 
	and $\eta=\lim_{k\to\infty}\frac{h_k^2}{h_k^1}$ if such limit exists. 
\end{theorem}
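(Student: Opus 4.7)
The plan is to follow the blueprint of Theorem \ref{thm: constant initial sample size}: exploit assumption 1 to reduce to a single population, rescale $\mathbb{E}N_1^l(k)$ by $(h_k^l/\Delta)^2$, and identify the limit of the resulting expectation. Identical distribution of $\sigma_1^2,\sigma_2^2,\ldots$ makes the sample sizes $N_1^l(k),\ldots,N_{k+1}^l(k)$ identically distributed, so it suffices to show that
\[
\alpha_k^l := \frac{\mathbb{E}N_1^l(k)}{(h_k^l/\Delta)^2} = \mathbb{E}\max\!\left\{\frac{N_0(k)+1}{(h_k^l/\Delta)^2},\; \frac{\lceil S_1^2(k)(h_k^l/\Delta)^2 \rceil}{(h_k^l/\Delta)^2}\right\}
\]
converges to $\mathbb{E}Y_l$.

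The first step is pointwise convergence of the integrand to $Y_l$. Conditional on $\Sigma$, the representation $(N_0(k)-1)S_1^2(k)/\sigma_1^2 \sim \chi^2_{N_0(k)-1}$, combined with assumption 3 and the strong law of large numbers, gives $S_1^2(k)\to \sigma_1^2$ almost surely. Assumption 4 handles the first argument of the maximum, and since assumptions 3 and 4 together force $(h_k^l/\Delta)^2\to\infty$, the ceiling is asymptotically harmless. Hence the integrand converges a.s.\ to $\max\{L_l,\sigma_1^2\}=Y_l$.

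The second step is to upgrade this to convergence in expectation. In Theorem \ref{thm: constant initial sample size} a single integrable random variable dominated the integrand because $S_1^2$ did not depend on $k$. Here $\{S_1^2(k)\}$ varies with $k$, so no dominator of the same explicit form is available and DCT cannot be applied verbatim. The natural substitute is Vitali's convergence theorem, for which I must verify uniform integrability. The key computation is $\mathbb{E}[(S_1^2(k))^2\mid\Sigma] = \sigma_1^4+2\sigma_1^4/(N_0(k)-1) \leq 3\sigma_1^4$, valid whenever $N_0(k)\geq 2$; taking expectations and invoking assumption 2 yields $\sup_k\mathbb{E}[(S_1^2(k))^2]<\infty$, hence $\{S_1^2(k)\}$ is uniformly integrable. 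Since assumption 4 bounds the first argument of the max uniformly in $k$, the integrand is dominated by a constant plus $S_1^2(k)$ and is therefore also uniformly integrable; Vitali then yields $\alpha_k^l\to\mathbb{E}Y_l$. The statement about $\eta$ then follows by forming the ratio of the two first-order approximations, once the limit $\lim h_k^2/h_k^1$ is assumed to exist.

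The main obstacle is the uniform-integrability step. Assumption 2 enters here in an essential way, compensating for the disappearance of a single integrable dominator that was caused by the empirical variances $S_1^2(k)$ themselves depending on $k$ through the growing initial sample size $N_0(k)$.
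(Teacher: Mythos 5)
Your proof is correct and follows essentially the same route as the paper's: reduction to a single population by exchangeability, the chi-square second-moment bound $\mathbb{E}\left[(S_1^2(k))^2\mid\Sigma\right]\le 3\left(\sigma_1^2\right)^2$ combined with assumption 2 to obtain uniform integrability of the (suitably dominated) integrand, and Vitali's convergence theorem to pass to the limit $\mathbb{E}Y_l$. The only difference is cosmetic: the paper runs the de la Vall\'ee Poussin/Vitali argument twice (first conditionally on $\Sigma$, then again for the sequence of conditional expectations), whereas you integrate out $\Sigma$ first and apply Vitali once --- a mild streamlining of the same idea.
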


\begin{proof}
	Let $l=1,2$. Using the same arguments appeared in the proof of Theorem \ref{thm: constant initial sample size}, for every $k\in\mathbb{N}$ 
	\begin{align*}
	\alpha_k^l:=\frac{\sum_{i=1}^{k+1}\mathbb{E}N_i^l(k)}{(k+1)(h_k^l/\Delta)^2}&=\mathbb{E}\max\left\{\frac{N_0(k)+1}{(h_k^l/\Delta)^2},\frac{\left\lceil S_1^2(k)\left(\frac{h_k^l}{\Delta}\right)^2\right\rceil}{(h_k^l/\Delta)^2}\right\}\,.
	\end{align*} 
	Denote the maximum inside the above-mentioned expectation by $M_k$ and recall that given $\Sigma$, $S_1^2(k)$ is an unbiased estimator of $\sigma_1^2$. Thus, it is known that given $\Sigma$   
	\begin{equation}\label{eq: chi}
	\frac{N_0^l(k)-1}{\sigma_1^2}S_1^2(k)\sim\chi^2_{(N_0^l(k)-1)}\,.
	\end{equation}
	Therefore, Equation \eqref{eq: chi} clearly implies that 
	
	\begin{equation*}
	Var\left[S_1^2(k)\big|\Sigma\right]=\left[\frac{\sigma_1^2}{N_0^l(k)-1}\right]^22\left[N_0^l(k)-1\right]\leq 2\left(\sigma_1^2\right)^2<\infty\,.
	\end{equation*}
	Consequently, since both the first and second moments of $S_1^2(k)$ given $\Sigma$ are finite and constants with respect to $k$, then it may be deduced that $\sup_{k\in\mathbb{N}}\mathbb{E}(M_k^2|\Sigma)$ is bounded by
	\begin{align*}
	\sup_{k\in\mathbb{N}}\mathbb{E}\left\{\left[\frac{N_0^l(k)+1}{(h_k^l/\Delta)^2}+S_1^2(k)+\left(\frac{\Delta}{h_k^l}\right)^2\right]^2\bigg|\Sigma\right\}<\infty
	\end{align*}
	where we have used the facts that $h_k^l\to\infty$ as $k\to\infty$ and $L_l<\infty$ in order to bound the first and third summands inside the  expectation.   
	Thus, by the theorem of de la Vall\'ee Poussin with test function $g(x)=x^2,\forall x\in\mathbb{R}_+$, deduce that given $\Sigma$, $\{M_k;k\in\mathbb{N}\}$ is a sequence of r.v's which are uniformly integrable. Thus, since $N_0(k)\to\infty$ as $k\to\infty$, the strong law of large numbers implies that given $\Sigma$,  $S_1(k)\xrightarrow{1}\sigma^2_1$ as $k\to\infty$. Therefore, Vitali's convergence theorem implies that 
	\begin{equation}\label{eq: conditional limit}
	\lim_{k\to\infty}\mathbb{E}\left(M_k|\Sigma\right)=\mathbb{E}\left(\lim_{k\rightarrow\infty}M_k|\Sigma\right)=\mathbb{E}\left(\max\left\{L_l,\sigma_1^2\right\}\big|\Sigma\right)=\max\left\{L_l,\sigma_1^2\right\}\,.
	\end{equation}
	In addition, observe that 
	
	\begin{equation*}
	\sup_{k\in\mathbb{N}}\mathbb{E}_{\Sigma}\mathbb{E}^2\left(M_k|\Sigma\right)\leq\sup_{k\in\mathbb{N}}\mathbb{E}_{\Sigma}\left[\frac{N_0^l(k)+1}{(h_k^l/\Delta)^2}+\sigma_1^2+\left(\frac{\Delta}{h_k^l}\right)^2\right]^2<\infty
	\end{equation*}
	where we have used the detail that $\sigma_1^2$ is associated with finite second moment along with the fact that for every $k\in\mathbb{N}$, $S_1^2(k)$ is an unbiased estimator of the variance, i.e. $\mathbb{E}\left[S_1^2(k)|\Sigma\right]=\sigma_1^2,\forall k\in\mathbb{N}$. Thus, once again, by the theorem of de la Vall\'ee Poussin with test function $g(x)=x^2,\forall x\in\mathbb{R}_+$, deduce that $\left\{\mathbb{E}(M_k|\Sigma);k\in\mathbb{N}\right\}$ is a sequence of r.v's which are uniformly integrable. Therefore, Vitali's convergence theorem implies that
	
	\begin{align*}
	\lim_{k\rightarrow\infty}\alpha_k^l=\lim_{k\rightarrow\infty}\mathbb{E}_{\Sigma}\mathbb{E}(M_k|\Sigma)=\mathbb{E}_{\Sigma}\lim_{k\rightarrow\infty}\mathbb{E}(M_k|\Sigma)=\mathbb{E}\max\left\{L,\sigma_1^2\right\}
	\end{align*}  
	where the last equality holds due to Equation \eqref{eq: conditional limit}. Finally, if
	\begin{equation*}
	\exists\lim_{k\to\infty}\frac{h_k^2}{h_k^1}=:H \ ,
	\end{equation*} 
	then we shall deduce that $\eta=H$. Especially observe that $EY<\infty$ stems directly from the Theorem assumptions.
\end{proof}

\section{Discussion}\label{sec: discussion}
Theorem \ref{thm: constant initial sample size} establishes the result that for a fixed initial sample size, with the current quasi-Bayesian assumptions, the asymptotic performance of procedure suggested by Dudewicz and Dalal is better than the performance of the procedure suggested by Rinott. While for this case, we have made strict conclusions, for the other case where the initial sample size tends to infinity as the number of populations tends to infinity things are different. Theorem \ref{thm: varying initial sample size} establishes a connection between the asymptotic performance of the procedures to the sequences $(h_k^1)$ and $(h_k^l)$. However, it is still not clear for which sequences $\left(N_0(k)\right)$, condition 4 of Theorem \ref{thm: varying initial sample size} is satisfied? Intuitively speaking, it seems reasonable that when $N_0(k)$ tends to infinity slow enough, then this condition holds, but still this requires further investigation. In addition, for this case, it is not clear, when it exists, what is the limit of $h_k^2/h_k^1$ as $k\to\infty$? To solve this questions, it seems that the same methods used by \cite{Jacobovic2017} in order to derive first order approximations of $(h_k^1)$ and $(h_k^2)$ for the fixed initial sample size should work here as well. However, the main obstacle in that way is the need to specify the extreme value distribution of some triangular arrays of random variables. In the case of $h_k^1$, for each $k$, it is necessary to find the limit distribution of the maximum of $k$ i.i.d student-t r.v's with $\nu_k=N_0(k)-1$ d.f's. Similarly, for the case of $h_k^2$ we should derive the limit distribution of $k$ i.i.d r.v's such that each of them is distributed like a sum of two independent student-t r.v's with $\nu_k$ d.f's. To the best of our knowledge non of these triangular arrays have references in literature.  For existing literature about extreme value distributions of triangular arrays see e.g. \cite{Anderson1997,Bose2008,Freitas2003,Hashrova2005,Hsing1996}.

\end{document}